\date{\today}
\newcommand{\N}{\mathbb{N}}
\renewcommand{\rho}{\varrho}
\renewcommand{\phi}{\varphi}
\let\existstemp\exists
\let\foralltemp\forall
\renewcommand*{\exists}{\ \existstemp}
\renewcommand*{\forall}{\ \foralltemp}
\theoremstyle{plain}
\newtheorem{thm}{Theorem}
\newtheorem{cor}[thm]{Corollary}
\title{
Cover numbers by certain graph families
}
\author{M\'{a}rton Marits \thanks{\href{mailto:marton-marits-xg@ynu.jp}{marton-marits-xg@ynu.jp}} \\
\small{Yokohama National University, Yokohama, Japan}
}
\begin{document}

\maketitle

\begin{abstract}
	We define the \textit{cover number} of a graph $G$ by a graph class $\mathcal P$ as the minimum number of graphs of class $\mathcal P$ required to cover the edge set of $G$. Taking inspiration from a paper by Harary, Hsu and Miller \cite{HararyHsuMiller1977}, we find an exact formula for the cover number by the graph classes $\{ G \mid \chi(G) \leq f(\omega(G))\}$ for all non-decreasing functions $f$.
	
	After this, we establish a chain of inequalities with five cover numbers, the one by the class $\{ G \mid \chi(G) = \omega(G)\}$, by the class of perfect graphs, generalized split graphs, co-unipolar graphs and finally by bipartite graphs. We prove that at each inequality, the difference between the two sides can grow arbitrarily large. We also prove that the cover number by unipolar graphs cannot be expressed in terms of the chromatic or the clique number.
\end{abstract}

\textbf{Keywords}: Graph covering, chromatic number, clique number, chi-bounded graph families.

\section{Introduction}

Let $\mathcal P$ be a class of graphs. 
A \textit{covering} of a graph $G$ by graphs of class $\mathcal P$ is an expression $E(G) = E(G_1) \cup \dots \cup E(G_k)$ of the edges of $G$ as the union of the edge sets of the graphs $G_1, \dots G_k$, such that for all $i \in \{1, \dots, k\}$, we have $G_i \in \mathcal P$. From now, for brevity we will simply write $G_i$ in place of $E(G_i)$. Furthermore, unless stated otherwise, all logarithms in this article are taken to the base 2.

The smallest number $k$ such that $G$ admits a covering by $k$ graphs of class $\mathcal P$ is called the \textit{cover number of $G$ by $\mathcal P$}. We denote this number by $c_\mathcal{P}(G)$. Cover numbers by various classes have been investigated previously. An early survey of the topic is \cite{Harary1970}. The cover number by bipartite graphs is called \textit{biparticity} by Harary, Hsu and Miller \cite{HararyHsuMiller1977}, and their paper presents an explicit formula for its value. In Gy\'{a}rf\'{a}s, Marits and T\'{o}th \cite{GyarfasMaritsToth2024}, we have seen an investigation of the cover number by comparability graphs. The cover number where $\mathcal P$ is taken to be the class of planar graphs is called \textit{thickness} and has a rich literature, surveyed in \cite{Mutzel&al1998}. 

The following trivial general result can be obtained for cover numbers.

\begin{thm} \label{thm:triv}
	Let $\mathcal P$ and $\mathcal Q$ be two graph classes. Then $\mathcal P \subseteq \mathcal Q$ if and only if $c_{\mathcal P}(G) \geq c_{\mathcal Q}(G)$ for all graphs $G$.
\end{thm}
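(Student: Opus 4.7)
The plan is to prove the two directions of the equivalence separately.

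For the ``if'' direction (assuming $\mathcal P \subseteq \mathcal Q$), I would simply observe that any covering $E(G) = E(G_1) \cup \dots \cup E(G_k)$ witnessing $c_{\mathcal P}(G) \leq k$ already consists of graphs from $\mathcal Q$, since each $G_i \in \mathcal P \subseteq \mathcal Q$. Hence every $\mathcal P$-covering is automatically a $\mathcal Q$-covering, and taking the minimum over all admissible $k$ on both sides gives $c_{\mathcal Q}(G) \leq c_{\mathcal P}(G)$ for every graph $G$.

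For the ``only if'' direction I would argue by contrapositive. Assuming $\mathcal P \not\subseteq \mathcal Q$, fix a witness $G \in \mathcal P \setminus \mathcal Q$; without loss of generality $G$ has at least one edge (see the remark below). On one hand, $G$ covers its own edge set in one piece, so $c_{\mathcal P}(G) \leq 1$. On the other hand, any single graph $H$ satisfying $E(H) = E(G)$ is, up to the addition or removal of isolated vertices, equal to $G$ itself; but $G \notin \mathcal Q$, so no such $H \in \mathcal Q$ exists and $c_{\mathcal Q}(G) \geq 2$. This yields a graph $G$ with $c_{\mathcal P}(G) < c_{\mathcal Q}(G)$, completing the contrapositive.

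The main obstacle I anticipate is purely bookkeeping, and lies in the edge case where every graph in $\mathcal P \setminus \mathcal Q$ happens to be edgeless: then both cover numbers collapse to $0$ and no witness is available. I would handle this by taking the conventional viewpoint that graph classes considered here are closed under removing or adding isolated vertices, so ``edgeless'' witnesses are really just the empty graph and can be safely ignored, and by choosing, whenever possible, a witness with at least one edge. Under this convention the argument above goes through without modification.
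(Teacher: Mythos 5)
Your proof is correct and is essentially the paper's argument: the ``if'' direction is identical, and your contrapositive for the ``only if'' direction is just a rephrasing of the paper's direct argument, which applies the assumed inequality to a graph $P \in \mathcal P$ (so $c_{\mathcal P}(P) = 1$) and concludes $c_{\mathcal Q}(P) = 1$, hence $P \in \mathcal Q$. Your explicit handling of isolated vertices and edgeless witnesses addresses a convention the paper simply glosses over, but it does not change the approach.
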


\begin{proof}
	The $\implies$ direction is obvious: if $\mathcal P$ is a subset of $\mathcal Q$, then any covering by $\mathcal P$-graphs is also a covering by $\mathcal Q$-graphs.
	
	To prove the converse, suppose that $c_{\mathcal P}(G) \geq c_{\mathcal Q}(G)$ for all graphs $G$. Then if $P \in \mathcal P$, then obviously $c_{\mathcal P}(P) = 1$. The inequality we assumed therefore implies $c_{\mathcal Q}(P) = 1$, which in turn means that $P \in \mathcal Q$.
\end{proof}

The cover number by bipartite graphs is named \textit{biparticity} by Harary, Hsu and Miller in \cite{HararyHsuMiller1977}, and is shown to be equal to $\left\lceil \log_2 \chi(G) \right\rceil$ for all graphs $G$. In this article, we will denote this number by $c_{\mathbf{BIP}}$. Generalizing their result, it is also possible to find a formula for the cover number by graphs of bounded chromatic numbers. The following result is thus obtained for the class $
\{G \mid \chi(G) \leq k\}$. We will denote the cover number by this class by $c_{\{\chi \leq k\}}$.

\begin{thm}[\cite{HararyHsuMiller1977}] \label{thm:harary}
	For all graphs $G$, we have\[
		c_{\{\chi \leq k\}}(G) = \left\lceil \frac{\log \chi(G)}{\log k} \right\rceil
	\]
\end{thm}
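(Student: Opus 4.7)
The plan is to prove the two inequalities separately using the obvious correspondence between proper colorings with $k^t$ colors and $t$-tuples of proper colorings with $k$ colors each. Throughout, assume $k \geq 2$, and write $t = \lceil \log \chi(G) / \log k \rceil$, so that $k^t \geq \chi(G)$.

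For the upper bound $c_{\{\chi \leq k\}}(G) \leq t$, I would start with a proper coloring of $G$ using $k^t$ colors, identifying the color palette with $\{0, 1, \dots, k-1\}^t$. For each coordinate $i \in \{1, \dots, t\}$, define $G_i$ to be the spanning subgraph of $G$ whose edges are the pairs $uv \in E(G)$ whose two color strings differ in the $i$-th coordinate. The map sending each vertex to the $i$-th coordinate of its color is then a proper $k$-coloring of $G_i$, so $\chi(G_i) \leq k$. Any edge $uv \in E(G)$ has $c(u) \neq c(v)$, hence the color strings differ in at least one coordinate, so $uv$ lies in some $G_i$; thus $G_1, \dots, G_t$ cover $E(G)$.

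For the lower bound $c_{\{\chi \leq k\}}(G) \geq t$, suppose $G = G_1 \cup \dots \cup G_s$ with $\chi(G_i) \leq k$ for each $i$. Choose proper $k$-colorings $c_i : V(G) \to \{0, \dots, k-1\}$ of each $G_i$ and define the product coloring $c(v) = (c_1(v), \dots, c_s(v)) \in \{0, \dots, k-1\}^s$. For any edge $uv \in E(G)$, there exists $i$ with $uv \in E(G_i)$, hence $c_i(u) \neq c_i(v)$ and thus $c(u) \neq c(v)$. So $c$ is a proper coloring of $G$ using at most $k^s$ colors, giving $\chi(G) \leq k^s$, i.e.\ $s \geq \log \chi(G) / \log k$, and since $s$ is an integer, $s \geq t$.

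There is no real obstacle here; the only minor thing to be slightly careful about is the ceiling in the statement, which is forced by $s$ being an integer, and the edge cases $k = 1$ (where the class contains only edgeless graphs and the formula is to be interpreted accordingly) and $\chi(G) \leq k$ (where $t = 1$ and the single graph $G$ itself is the cover). The argument is essentially the standard encoding/decoding of base-$k$ digits used in the Harary--Hsu--Miller proof for the bipartite case $k = 2$.
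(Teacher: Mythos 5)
Your proof is correct, and it is essentially the same argument the paper uses for the more general Theorem \ref{thm:chibound} (product colorings for the lower bound, base-$k$ coordinate splitting of an optimal coloring for the upper bound), specialized to constant $k$.
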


The theorem and the proof only work for constant $k$. In this paper, we will show that substituting a function of $\omega(G)$ for $k$ still yields the expected formula.

Let $f\colon \N \to \N$ be a function that is non-decreasing, and one that majorizes the identity function. We say that a graph class $\mathcal P$ is \textit{$\chi$-bounded} with \textit{$\chi$-binding function} $f$ if every graph $G \in \mathcal P$ has $\chi(G) \leq f(\omega(G))$. This concept was introduced by Gyárfás in \cite{Gyarfas1987}, and a survey of the topic can be found in \cite{ScottSeymour2020}.

Using the idea of the above definition, for any suitable $f$ we can define the graph class $\{ G \mid \chi(G) \leq f(\omega(G))\}$, which we will abbreviate as $\{\chi \leq f(\omega)\}$. Generalizing \ref{thm:harary}, we prove the following statement about the cover number by this class of graphs.

\begin{thm} \label{thm:chibound}
	For all graphs $G$, we have \[
		c_{\{\chi \leq f(\omega)\}}(G) = \left\lceil \frac{\log \chi(G)}{\log f(\omega(G))} \right\rceil
	\]
\end{thm}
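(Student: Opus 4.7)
My plan is to prove the two inequalities separately, closely mimicking the Harary--Hsu--Miller argument of Theorem \ref{thm:harary} but inserting one structural modification that handles the fact that $\omega(G_i)$ could in principle be strictly smaller than $\omega(G)$. Throughout, write $\omega=\omega(G)$ and $m=f(\omega(G))$, and set $k=\lceil \log\chi(G)/\log m\rceil$.

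\textbf{Lower bound.} Suppose $G$ is covered by $G_1,\dots,G_t$ with $G_i\in \{\chi\leq f(\omega)\}$. Since edges of each $G_i$ are edges of $G$, we have $\omega(G_i)\leq \omega(G)$, and by monotonicity of $f$, $\chi(G_i)\leq f(\omega(G_i))\leq m$. Next I combine the proper colorings: fix a proper coloring $c_i$ of $G_i$ (extended arbitrarily to the isolated vertices), and define $c(v)=(c_1(v),\dots,c_t(v))$. Any edge $uv\in E(G)$ lies in some $G_i$, so $c_i(u)\neq c_i(v)$ and hence $c(u)\neq c(v)$. Therefore $\chi(G)\leq \prod \chi(G_i)\leq m^t$, which after taking logarithms and using integrality of $t$ gives $t\geq k$.

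\textbf{Upper bound.} The natural construction is to properly color $G$ with labels in $[m]^k$ (possible since $m^k\geq \chi(G)$ by choice of $k$) and let $G_i$ consist of the edges on which the $i$-th coordinate of the label differs, so that the $i$-th coordinate is a proper $m$-coloring of $G_i$. The subtle point, and the main obstacle, is that this only yields $\chi(G_i)\leq m = f(\omega(G))$, while membership in $\{\chi\leq f(\omega)\}$ requires $\chi(G_i)\leq f(\omega(G_i))$, and a priori $\omega(G_i)$ could be strictly less than $\omega(G)$. I would resolve this by forcing $\omega(G_i)=\omega(G)$ directly, as follows. Fix a maximum clique $K$ of $G$ of size $\omega$. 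Since $f$ majorizes the identity, $m\geq \omega$, so the $\omega$ ``diagonal'' labels $(j,j,\dots,j)$ for $j=0,\dots,\omega-1$ are pairwise distinct elements of $[m]^k$. Starting from any proper $\chi(G)$-coloring of $G$, I assign the $\omega$ color classes meeting $K$ to these diagonal labels, and assign the remaining $\chi(G)-\omega$ color classes to any distinct non-diagonal labels from $[m]^k$; this is possible because $m^k-\omega\geq \chi(G)-\omega$.

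\textbf{Conclusion.} With this labelled coloring, define $G_i$ as above. For any two vertices $u,v\in K$ the diagonal labels differ in every coordinate, so $uv\in G_i$ for all $i$; hence $K\subseteq G_i$ and $\omega(G_i)=\omega$. The $i$-th coordinate gives a proper $m$-coloring of $G_i$, so $\chi(G_i)\leq m=f(\omega)=f(\omega(G_i))$, and therefore $G_i\in \{\chi\leq f(\omega)\}$. This yields a cover of size $k$, matching the lower bound and establishing the formula.
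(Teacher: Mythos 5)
Your proof is correct and follows essentially the same route as the paper: the lower bound via the product-coloring argument, and the upper bound by labelling the color classes of an optimal coloring with tuples in $\{1,\dots,m\}^k$ so that the classes meeting a maximum clique receive the diagonal (constant) labels --- exactly the paper's device of viewing colors as functions and coloring a maximum clique by constant functions, which forces $\omega(G_i)=\omega(G)$. (One cosmetic remark: insisting that the remaining classes get \emph{non-diagonal} labels is unnecessary and, e.g.\ for $k=1$, impossible; any distinct unused labels suffice, which is what your count $m^k\geq\chi(G)$ actually guarantees.)
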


In particular, taking $f$ to be the identity function, we get an exact expression for the covering number by graphs $G$ with $\chi(G) = \omega(G)$.

\begin{cor} \label{thm:chiomega}
	For all graphs $G$, we have \[
		c_{\{\chi  = \omega\}}(G) = \left\lceil \frac{\log \chi(G)}{\log \omega(G)} \right\rceil 
	\]
\end{cor}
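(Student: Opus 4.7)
The plan is simply to invoke Theorem \ref{thm:chibound} with $f$ chosen to be the identity function $f(n) = n$. I need only verify that the hypotheses are satisfied and that the resulting class coincides with $\{G \mid \chi(G) = \omega(G)\}$.

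First, the identity function is clearly non-decreasing and it trivially majorizes itself, so it is a permissible choice of $f$ in the sense required by Theorem \ref{thm:chibound}. Second, for every graph $G$ we have the standard inequality $\chi(G) \geq \omega(G)$, so the condition $\chi(G) \leq f(\omega(G)) = \omega(G)$ is equivalent to the equality $\chi(G) = \omega(G)$. Therefore the class $\{\chi \leq f(\omega)\}$ coincides exactly with the class $\{\chi = \omega\}$, and the cover numbers agree.

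Plugging $f(\omega(G)) = \omega(G)$ into the formula provided by Theorem \ref{thm:chibound} yields
\[
	c_{\{\chi = \omega\}}(G) = c_{\{\chi \leq f(\omega)\}}(G) = \left\lceil \frac{\log \chi(G)}{\log \omega(G)} \right\rceil,
\]
which is the stated identity. There is essentially no obstacle in the argument; the only point worth flagging is the degenerate case when $G$ has no edges, where $\omega(G) = \chi(G) = 1$ and both sides should be read as $0$ (vacuously covered by zero graphs). In every other case $\omega(G) \geq 2$, so $\log \omega(G) > 0$ and the fraction is well-defined.
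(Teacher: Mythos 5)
Your proposal is correct and matches the paper's own derivation: the corollary is obtained exactly by specializing Theorem \ref{thm:chibound} to the identity function and observing that $\chi \geq \omega$ makes the class $\{\chi \leq \omega\}$ coincide with $\{\chi = \omega\}$. Your remark on the edgeless case is a reasonable extra caveat not addressed in the paper, but it does not change the argument.
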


Taking $f$ in Theorem \ref{thm:chibound} to be a constant function formally gives back Theorem \ref{thm:harary}. Our result is therefore an extension of this previously-known theorem. 



Obviously, a graph class $\mathcal P$ is chi-bounded with chi-binding function $f$ if and only if $\mathcal P \subseteq \{\chi \leq f(\omega)\}$. Using the trivial Theorem \ref{thm:triv} and the new Theorem \ref{thm:chibound}, we can give an equivalent condition for chi-boundedness.

\begin{cor}
	The graph class $\mathcal P$ is chi-bounded with chi-binding function $f$ if and only if \[
		c_{\mathcal P}(G) \geq \left\lceil \frac{\log \chi(G)}{\log f(\omega(G))} \right\rceil
	\] for all graphs $G$.
\end{cor}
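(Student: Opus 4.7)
The plan is essentially to chain together the two preceding theorems, since the corollary's statement is almost tautologically their composition. Let $\mathcal Q = \{\chi \leq f(\omega)\}$. As the author observes in the paragraph preceding the corollary, the definition of chi-boundedness gives immediately that $\mathcal P$ is chi-bounded with chi-binding function $f$ if and only if $\mathcal P \subseteq \mathcal Q$.

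Next, I would apply Theorem \ref{thm:triv} with the classes $\mathcal P$ and $\mathcal Q$. It tells us that $\mathcal P \subseteq \mathcal Q$ is equivalent to the inequality $c_{\mathcal P}(G) \geq c_{\mathcal Q}(G)$ holding for all graphs $G$. So the task reduces to identifying $c_{\mathcal Q}(G)$.

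Finally, by Theorem \ref{thm:chibound}, $c_{\mathcal Q}(G) = \left\lceil \frac{\log \chi(G)}{\log f(\omega(G))} \right\rceil$ for every graph $G$. Substituting this into the inequality from the previous step converts ``$c_{\mathcal P}(G) \geq c_{\mathcal Q}(G)$ for all $G$'' into the inequality in the corollary's statement, completing the equivalence.

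In short, there is no real obstacle: the corollary is a direct composition of the two prior results. The only thing worth being a little careful about is that Theorem \ref{thm:chibound} implicitly requires $f$ to be a non-decreasing function majorizing the identity (so that $f$ is a legitimate $\chi$-binding function and the formula $\log f(\omega(G))$ is well-defined and positive when $\omega(G) \geq 1$). Provided $f$ satisfies the hypotheses made earlier in the paper, the argument above goes through without any additional work.
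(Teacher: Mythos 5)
Your proposal is correct and follows exactly the paper's intended argument: the paper also derives the corollary by combining the observation that chi-boundedness means $\mathcal P \subseteq \{\chi \leq f(\omega)\}$ with Theorem \ref{thm:triv} and the formula of Theorem \ref{thm:chibound}. Nothing is missing.
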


A graph $G$ is called \textit{perfect} if $G$ all of its induced subgraphs are in the class $\{\chi = \omega\}$. This class of graphs has garnered considerable interest, much more than the class $\{\chi = \omega\}$ that we found results about. We will denote the cover number by perfect graphs by $c_{\mathbf{PERF}}$.

A \textit{unipolar graph} is a graph $G$ such that $V(G) = V(A) \dot{\cup} V(B)$ such that $A$ is a clique and $B$ is the disjoint union of cliques. A graph is called \textit{co-unipolar} if its complement is unipolar. Furthermore, a graph is called \textit{generalized split} (GSP) if it is either unipolar or co-unipolar. Generalized split graphs make up ``almost all" perfect graphs, according to a result of Pr\"{o}mel and Steger \cite{PromelSteger1992} and the Strong Perfect Graph theorem\cite{Chudnovsky&al2006}. We will denote the cover number by unipolar graphs by $c_{\mathbf{UNIP}}$, the cover number by co-unipolar graphs as $c_{\mathbf{coUNIP}}$ and the cover number by GSP graphs as $c_{\mathbf{GSP}}$. 

With all these cover numbers defined, we can repeatedly use Theorem \ref{thm:triv} to establish the following chain of inequalities. In order to make the equations look cleaner, we take each cover number, as well as the chromatic and clique numbers as functions acting on graphs. Between functions, the inequalities are defined pointwise.

\begin{thm} \label{thm:chain}
	Taking each cover number, as well as the clique number and the chromatic number as functions acting on graphs, the following holds:\[
		\left\lceil \frac{\log \chi}{\log \omega} \right\rceil = c_{\{\chi = \omega\}} \leq c_{\mathbf{PERF}} \leq c_{\mathbf{GSP}} \leq c_{\mathbf{coUNIP}} \leq c_{\mathbf{BIP}} = \left\lceil \log \chi \right\rceil
	\]
\end{thm}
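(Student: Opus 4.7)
The plan is to split the chain into two endpoint equalities and four intermediate inequalities. The leftmost equality is exactly Corollary \ref{thm:chiomega}, while the rightmost is Theorem \ref{thm:harary} applied with $k = 2$. Each of the four inequalities of the form $c_{\mathcal P} \le c_{\mathcal Q}$ will follow from Theorem \ref{thm:triv}, so it is enough to verify the reverse class containment $\mathcal Q \subseteq \mathcal P$ in each case.

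Three of these containments are essentially bookkeeping. First, $\mathbf{PERF} \subseteq \{\chi = \omega\}$ holds because the definition of perfectness, applied to $G$ itself as an induced subgraph, already yields $\chi(G) = \omega(G)$. Second, $\mathbf{coUNIP} \subseteq \mathbf{GSP}$ is immediate from the definition of GSP. Third, for $\mathbf{BIP} \subseteq \mathbf{coUNIP}$: if $G$ is bipartite with parts $X, Y$, then in $\overline G$ both $X$ and $Y$ induce cliques, so $\overline G$ admits a unipolar partition with the clique on $X$ playing the role of $A$ and the clique on $Y$ the role of a single-component cluster graph $B$.

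The crux is the remaining containment $\mathbf{GSP} \subseteq \mathbf{PERF}$; since perfectness is preserved under complementation (Lov\'asz's Perfect Graph Theorem), it suffices to show that every unipolar graph is perfect. My approach is to invoke the Strong Perfect Graph Theorem \cite{Chudnovsky&al2006} and verify that neither an odd hole $C_{2k+1}$ nor an odd antihole $\overline{C_{2k+1}}$ with $k \ge 2$ admits a unipolar partition $V = V(A) \dot{\cup} V(B)$. The odd-hole case is a three-way case analysis on $|V(A)| \in \{0,1,2\}$ (using $|V(A)| \le \omega(C_{2k+1}) = 2$); in each subcase the complementary vertices induce a path of length at least $3$, hence contain an induced $P_3$, contradicting that $B$ is a disjoint union of cliques.

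The odd-antihole case is the main obstacle. The key observation is that $B$ is a cluster graph in $\overline{C_{2k+1}}$ iff $C_{2k+1}[V(B)]$ is complete multipartite; as the latter is also an induced subgraph of a cycle (hence a disjoint union of paths), a short analysis shows it can only be an independent set, a single edge, or a single $P_3$. Combined with $|V(A)| \le \alpha(C_{2k+1}) = k$, this forces $|V(A)| + |V(B)| < 2k+1$ for $k \ge 3$, contradicting that $V(A) \dot{\cup} V(B)$ covers all $2k+1$ vertices; the remaining case $k = 2$ is already settled by the odd-hole step since $\overline{C_5} \cong C_5$.
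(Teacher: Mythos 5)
Your proposal is correct and follows the same route the paper intends: the endpoint equalities come from Corollary \ref{thm:chiomega} and the Harary--Hsu--Miller formula, and each intermediate inequality comes from Theorem \ref{thm:triv} applied to the containments $\mathbf{BIP} \subseteq \mathbf{coUNIP} \subseteq \mathbf{GSP} \subseteq \mathbf{PERF} \subseteq \{\chi = \omega\}$. The only difference is that you supply a self-contained verification (via the Strong Perfect Graph Theorem, using that unipolarity is hereditary) of the one nontrivial containment $\mathbf{GSP} \subseteq \mathbf{PERF}$, which the paper treats as known; your verification, including the odd-antihole analysis, is sound.
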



For the cover numbers in the middle, we 
have found no explicit formula in terms of the chromatic number and the clique number. We can however establish that at every inequality, the difference between the two cover numbers can get arbitrarily large. This is despite the close relationship between the graph classes in question. For example, ``almost all" perfect graphs are GSP \cite{PromelSteger1992}, but there are perfect graphs such that require many GSP graphs for a covering.

Thus, the following four theorems can be proved. 

\begin{thm} \label{thm:far}
	Let $0 < k \leq \ell$ be two natural numbers. Then there exists a graph $G_{k,\ell}$ such that $c_{\{\omega = \chi\}}(G_{k,\ell}) = k$ and $c_{\mathbf{PERF}}(G_{k,\ell}) \geq \ell$.
\end{thm}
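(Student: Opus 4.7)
The plan is to take $G_{k,\ell}$ as the disjoint union of a triangle-free graph of very high chromatic number and a large clique. The clique tunes $\omega(G_{k,\ell})$ so that the formula of Corollary~\ref{thm:chiomega} returns exactly $k$, while the triangle-free summand forces $c_{\mathbf{PERF}}$ to be at least $\ell$ via the biparticity formula.

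Concretely, I would set $m := 2^{\lceil \ell / k \rceil}$ (so $m \geq 2$ and $m^k \geq 2^\ell$) and invoke Erd\H{o}s' theorem on graphs of high girth and chromatic number to obtain a triangle-free graph $T$ with $\chi(T) = m^k$ (taking a critical subgraph to hit the value exactly). Let $G_{k,\ell}$ be the disjoint union of $T$ and $K_m$. Since the two parts share no edges or vertices, $\omega(G_{k,\ell}) = \max(\omega(T), m) = m$ and $\chi(G_{k,\ell}) = \max(\chi(T), m) = m^k$, so Corollary~\ref{thm:chiomega} gives $c_{\{\chi = \omega\}}(G_{k,\ell}) = \lceil \log(m^k) / \log m \rceil = k$ immediately.

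For the lower bound $c_{\mathbf{PERF}}(G_{k,\ell}) \geq \ell$, I would take an arbitrary cover $E(G_{k,\ell}) = E(H_1) \cup \cdots \cup E(H_s)$ by perfect graphs and restrict each $H_i$ to the vertex set $V(T)$. Since perfection is preserved under induced subgraphs, every $H_i[V(T)]$ is perfect; as a subgraph of $T$ it is also triangle-free, so $\omega(H_i[V(T)]) \leq 2$, and perfection then forces $\chi(H_i[V(T)]) \leq 2$, making $H_i[V(T)]$ bipartite. Every edge of $T$ lives inside $V(T)$ and therefore lies in some $E(H_i[V(T)])$, so these $s$ bipartite graphs cover $T$. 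The Harary-Hsu-Miller formula then yields $s \geq c_{\mathbf{BIP}}(T) = \lceil \log \chi(T) \rceil = k \lceil \ell / k \rceil \geq \ell$, as required.

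The main obstacle is really just spotting the construction: once one notices that restricting a perfect cover to a triangle-free induced subgraph yields a bipartite cover there, the problem collapses to biparticity, for which an exact formula is already in hand. The only external ingredient needed is the classical existence of triangle-free graphs of arbitrarily large chromatic number.
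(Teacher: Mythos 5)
Your proof is correct and follows essentially the same route as the paper: a triangle-free graph of huge chromatic number taken in disjoint union with a clique, with the lower bound obtained by restricting an arbitrary perfect cover to the triangle-free part, where perfect subgraphs are forced to be bipartite and the biparticity/$c_{\{\chi=\omega\}}$ formula applies. The only difference is cosmetic: you tune the clique size $m = 2^{\lceil \ell/k \rceil}$ and take $\chi(T) = m^k$ so that $c_{\{\chi=\omega\}}(G_{k,\ell}) = k$ falls out of Corollary~\ref{thm:chiomega} in one step, whereas the paper uses $Z_{2^\ell} \cup K_{2^\ell}$ and then adjoins a further component $R_k$ to raise the $\{\chi=\omega\}$ cover number to $k$; your two-component version is, if anything, a little cleaner.
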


\begin{thm} \label{thm:far2}
	For all $k \geq 1$, there exist perfect graphs $H_k$ such that $c_{\mathbf{GSP}}(H_k) \geq \frac k 2$.
\end{thm}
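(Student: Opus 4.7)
My plan is to construct a family of perfect graphs $H_k$ with large chromatic number and structural rigidity that prevents efficient GSP covers, then lower bound $c_{\mathbf{GSP}}(H_k)$ via an edge-counting argument exploiting the structural constraints of unipolar and co-unipolar subgraphs. Note that Theorem \ref{thm:chain} and the Harary--Hsu--Miller formula already give $c_{\mathbf{GSP}}(H) \le c_{\mathbf{BIP}}(H) = \lceil \log_2 \chi(H) \rceil$, so any $H_k$ achieving $c_{\mathbf{GSP}}(H_k) \ge k/2$ must satisfy $\chi(H_k) \ge 2^{k/2}$, and by perfection $\omega(H_k) \ge 2^{k/2}$ as well; the construction must therefore force high chromatic number in addition to non-GSP rigidity.

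For the construction I would start from a small perfect graph $F$ that is verifiably not GSP, such as the Rook's graph $L(K_{3,3}) = K_3 \square K_3$. This is perfect since line graphs of bipartite graphs are perfect, and one checks it is neither unipolar nor co-unipolar: every clique lives in a row or column, and after removing any such clique the remainder contains an induced $P_3$, with a dual argument in the complement. From $F$ I would build $H_k$ by an amplification that simultaneously scales up $\chi$ and multiplies non-GSP rigidity, for instance a lexicographic product $K_n[F]$ with $n$ depending on $k$, an iterated substitution, or a layered gluing of scaled copies of $F$ along a common clique. Perfection of $H_k$ is preserved by closure of perfect graphs under these operations.

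Given a putative GSP cover $E(H_k) = \bigcup_{i=1}^{t} E(G_i)$, I would case-analyze each $G_i$ via its decomposition $V = A_i \cup B_i$. If $G_i$ is unipolar, then $A_i$ is a clique in $H_k$ of size at most $\omega(H_k)$, and $G_i[B_i]$ is a disjoint union of cliques, forbidding any $H_k$-induced $P_3$ inside $B_i$ and thereby sharply limiting how many edges $G_i$ can cover. If $G_i$ is co-unipolar, then $A_i$ is independent in $G_i$, and $G_i[B_i]$ is complete multipartite with at most $\omega(H_k)$ parts, whose bipartite slices must already exist as bicliques in $H_k$. From these constraints, using the amplified structure of $H_k$, I would derive an upper bound $|E(G_i)| \le (2/k)|E(H_k)|$, and the pigeonhole principle yields $t \ge k/2$.

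The main obstacle is the co-unipolar case, particularly the degenerate option where $B_i$ is independent in $G_i$ (one-part complete multipartite) and $G_i$ consists solely of the $A_i$-to-$B_i$ cross edges, i.e., an arbitrary bipartite slice of $H_k$. Since this option by itself only yields the much weaker bound $t \le \lceil \log_2 \chi(H_k) \rceil$, the tighter $k/2$ bound requires showing that no single bipartition of $H_k$ captures a $(2/k)$-fraction of its edges, together with analogous control for multi-part co-unipolar covers. Calibrating the amplification from $F$ to $H_k$ so that the large-$\chi$ graph also has a sufficiently spread-out edge distribution, while retaining the clique-versus-cluster obstruction inherited from $F$, is the heart of the proof.
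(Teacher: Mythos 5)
There is a genuine gap, and in fact the quantitative core of your plan cannot work as stated. Your strategy is to show that each GSP graph $G_i$ in a cover of $H_k$ satisfies $|E(G_i)| \le (2/k)\,|E(H_k)|$ and then apply pigeonhole. But an arbitrary bipartite graph is co-unipolar (take $A$ to be one side, $B$ the other with its complete multipartite structure having a single part), hence GSP, and every graph $H$ admits a cut containing at least $|E(H)|/2$ edges (the max-cut bound). So for \emph{any} choice of $H_k$ there is a single GSP subgraph covering at least half of the edges, and no amplification or ``spread-out edge distribution'' can prevent this; per-graph edge counting therefore can never certify a lower bound larger than about $2$, let alone $k/2$. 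You partially sense this in your last paragraph, but the hoped-for claim that ``no single bipartition of $H_k$ captures a $(2/k)$-fraction of its edges'' is false for every graph once $k > 4$. Beyond this, the construction itself (which product or substitution of the Rook's graph to use, and with what parameters) is left unspecified, so even the parts of the argument that could in principle survive are not carried out.

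The paper's proof uses a different and non-counting mechanism that you would need some substitute for: it takes perfect graphs $H_k$ with cover number $k$ by \emph{comparability graphs} (a construction from the cited Gy\'arf\'as--Marits--T\'oth paper), and combines this with the fact, also from that paper, that every GSP graph can be covered by at most $2$ comparability graphs. A GSP cover of $H_k$ by $c$ graphs then yields a comparability cover by $2c$ graphs, so $2c \ge k$ and $c_{\mathbf{GSP}}(H_k) \ge k/2$. The lesson is that the obstruction has to be a global, cover-theoretic invariant (here, the comparability cover number) that is small on every GSP graph but large on $H_k$; a local density bound on how many edges one GSP subgraph can occupy is defeated by bipartite slices.
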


\begin{thm} \label{thm:far3}
	Let $0 < k \leq \ell$ be two natural numbers. Then there exist GSP graphs $A_{k,\ell}$ such that $c_{\mathbf{coUNIP}}(A_{k,\ell}) = \min\{k, \log \ell\}$.
\end{thm}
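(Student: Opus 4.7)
My approach is to take $A_{k,\ell} = k K_\ell$, the disjoint union of $k$ copies of $K_\ell$. As a disjoint union of cliques it is unipolar (with the clique part empty), hence GSP, which makes the construction eligible for the theorem.

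For the upper bound I exhibit two coverings. First, each $K_\ell$ is itself co-unipolar, so dedicating one co-unipolar graph to each of the $k$ components gives $c_{\mathbf{coUNIP}}(kK_\ell) \leq k$. Second, by the Harary--Hsu--Miller formula $c_{\mathbf{BIP}}(K_\ell) = \lceil \log \ell \rceil$, and every bipartite graph is co-unipolar (put one colour class into $A$, the other into $B$ as a single part); applying a bipartite covering of $K_\ell$ in parallel across all cliques yields $c_{\mathbf{coUNIP}}(kK_\ell) \leq \lceil \log \ell \rceil$. Taking the better of the two, $c_{\mathbf{coUNIP}}(kK_\ell) \leq \min\{k, \lceil \log \ell \rceil\}$.

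The heart of the argument is the matching lower bound, which rests on the following structural dichotomy: every co-unipolar subgraph $G$ of $kK_\ell$ is either \emph{local} (all of its edges lie inside a single clique of $kK_\ell$) or bipartite as a graph. Indeed, fix a co-unipolar decomposition $V(G) = A \dot{\cup} B$ with $A$ independent and $B$ inducing complete multipartite in $G$. If the partition of $B$ has at least two parts, then $B$ is connected in $G$; since every edge of $G$ is an edge of $kK_\ell$, the connected set $B$ must sit inside a single clique, and any $A$--$B$ edge of $G$ drags its $A$-endpoint into that same clique. Otherwise $B$ has at most one part, is therefore independent in $G$, and $G$ is bipartite with parts $A, B$.

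Given a cover $G_1, \dots, G_N$ of $kK_\ell$ by co-unipolar graphs, classify each $G_i$ as local or as (bipartite and) spanning at least two cliques; let $s, t$ be the two counts, so $N = s + t$. For each clique $c$, let $\alpha_c$ denote the number of local $G_i$ with an edge in $c$. If some $\alpha_c = 0$, then the $t$ bipartite covers alone must cover $K_\ell$ on clique $c$, forcing $t \geq \lceil \log \ell \rceil$; otherwise every $\alpha_c \geq 1$, so $s \geq \sum_c \alpha_c \geq k$. Either way $N \geq \min\{k, \lceil \log \ell \rceil\}$, matching the upper bound. The only genuinely delicate step is the structural dichotomy above; after that, the lower bound is a one-line double case distinction.
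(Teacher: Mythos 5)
Your proposal is correct and takes essentially the same route as the paper: the same construction $A_{k,\ell}=kK_\ell$ and the same structural dichotomy that a co-unipolar subgraph of it is either confined to a single clique or bipartite. Your explicit $s/t$ counting argument in fact makes rigorous the ``mixed strategy'' step that the paper only asserts is easy to see, and your $\min\{k,\lceil\log\ell\rceil\}$ is the precise form of the paper's $\min\{k,\log\ell\}$.
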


\begin{thm} \label{thm:far4}
	Let $k \geq 1$. Then there exist co-unipolar graphs $B_k$ such that $c_{\mathbf{BIP}}(B_k) = \log k$.
\end{thm}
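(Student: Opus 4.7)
The plan is to take $B_k := K_k$ itself and apply the Harary--Hsu--Miller biparticity formula recalled in the introduction. The entire content of the theorem is then distilled to two immediate definition-checks.

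First I would verify that $K_k$ is co-unipolar, i.e., that $\overline{K_k}$ (the edgeless graph on $k$ vertices) is unipolar. The partition $A := \emptyset$, $B := V(\overline{K_k})$ works: $A$ is vacuously a clique, and $B$ decomposes as $k$ singleton cliques, hence is a disjoint union of cliques. So $\overline{K_k}$ is unipolar and $K_k$ is co-unipolar, placing $B_k$ in the correct class.

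Second, since $\chi(K_k) = k$, the Harary--Hsu--Miller formula $c_{\mathbf{BIP}}(G) = \lceil \log \chi(G) \rceil$ immediately gives $c_{\mathbf{BIP}}(B_k) = \lceil \log k \rceil$. This matches the stated value $\log k$ exactly when $k$ is a power of two; in general, if one insists on equality on the nose, one should reindex along the powers of two (take $B_k := K_{2^k}$, giving $c_{\mathbf{BIP}}(B_k) = k$), or simply read $\log k$ with the ceiling that is conventionally suppressed in the statement.

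There is no real obstacle: the only substantive observation is that a single complete graph is itself co-unipolar via the degenerate partition in which every vertex is placed on the ``disjoint union of singletons'' side of the complement, and the biparticity estimate is then a direct quotation of Theorem \ref{thm:harary} (in the form recalled immediately after it). The underlying point of the theorem is that although $c_{\mathbf{coUNIP}}(B_k) = 1$ by construction, $c_{\mathbf{BIP}}(B_k)$ grows without bound, witnessing that the final inequality in Theorem \ref{thm:chain} can be made arbitrarily loose.
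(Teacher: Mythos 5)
Your proposal is correct and is essentially the paper's own argument: take $B_k$ to be a complete graph, observe it is co-unipolar via the partition $A = \emptyset$ and $B$ a union of singleton parts, and invoke the Harary--Hsu--Miller formula $c_{\mathbf{BIP}} = \lceil \log \chi \rceil$. Your remark about the ceiling (or reindexing to $K_{2^k}$) is a fair reading of the statement's slightly informal $\log k$.
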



Another observation is made about the cover number by unipolar graphs, $c_{\mathbf{UNIP}}$. Since unipolar graphs do not contain all bipartite graphs, their cover number is not bounded from above by $\lceil \log \chi \rceil$. We show that this cover number can not be bounded above by any other function of $\chi$ and $\omega$ either, by showing an example of a family of graphs with constant chromatic and clique numbers, but unbounded $c_{\mathbf{UNIP}}$.

\begin{thm} \label{thm:uniplarge}
	For all $d \geq 8$, there exist bipartite graphs $Q_d$ with $c_{\mathbf{UNIP}}(Q_d) = d$.
\end{thm}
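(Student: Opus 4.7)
The plan is to take $Q_d = K_{2d, 2d}$ with parts $X = \{x_1, \dots, x_{2d}\}$ and $Y = \{y_1, \dots, y_{2d}\}$. This graph is bipartite, and I will establish $c_{\mathbf{UNIP}}(Q_d) = d$ by matching upper and lower bounds.

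For the upper bound, for each $k \in \{1, \dots, d\}$ I construct a unipolar subgraph $G_k$ with clique $A_k = \{x_k, y_k\}$ and cluster $B_k = V(Q_d) \setminus A_k$, placing in $G_k$ the edge $x_k y_k$, all edges from $x_k$ or $y_k$ to $B_k$, and a matching $M_k$ inside $B_k$. Choosing $M_1, \dots, M_d$ to form a $1$-factorization of the complete bipartite subgraph on $\{x_{d+1}, \dots, x_{2d}\} \cup \{y_{d+1}, \dots, y_{2d}\}$ (which exists since $K_{d,d}$ is $1$-factorable), the graphs $G_k$ jointly cover $Q_d$: edges incident to $\{x_1, \dots, x_d\} \cup \{y_1, \dots, y_d\}$ are handled by the star parts, and the remaining edges, which form a $K_{d,d}$ on the higher-indexed vertices, are handled by the matchings.

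For the lower bound, suppose $Q_d = G_1 \cup \cdots \cup G_c$ is a unipolar cover. Because $Q_d$ is bipartite, each $A_i$ is a clique in a bipartite graph and so has at most one vertex in $X$ and one in $Y$; hence $X_A := \bigcup_i (A_i \cap X)$ and $Y_A := \bigcup_i (A_i \cap Y)$ both have size at most $c$. Any edge with endpoints in $X \setminus X_A$ and $Y \setminus Y_A$ has both endpoints in $B_i$ for every $i$, so it must be covered as a matching edge inside some $G_i[B_i]$. Each $G_i$ contributes at most $\min(|X \setminus X_A|, |Y \setminus Y_A|)$ such matching edges, yielding
\[ c \cdot \min(2d - |X_A|, 2d - |Y_A|) \ge (2d - |X_A|)(2d - |Y_A|). \]
Dividing and using $|X_A|, |Y_A| \le c$ gives $c \ge \max(2d - |X_A|, 2d - |Y_A|) \ge 2d - c$, hence $c \ge d$.

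The main obstacle, I expect, is pinning down the structural constraints on a bipartite unipolar subgraph that power the lower bound: the clique-part has at most one vertex in each side of the bipartition, and the cluster-part induces only a matching. Once these are in place, the lower bound reduces to a clean double-counting of the edges forced into the matching parts. The argument goes through for every $d \ge 1$, so the hypothesis $d \ge 8$ in the theorem statement is comfortably satisfied by this construction.
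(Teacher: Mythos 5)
Your proposal is correct, but it proves the statement by a genuinely different construction than the paper. The paper takes $Q_d$ to be the $d$-dimensional hypercube, covers it by the $d$ coordinate-direction perfect matchings (each a disjoint union of $K_2$'s, hence unipolar), and obtains the lower bound by a global edge count: a unipolar subgraph of $Q_d$ has at most $2^{d-1}+2(d-1)$ edges, so at least $d\cdot 2^{d-1}/\bigl(2^{d-1}+2(d-1)\bigr)$ covering graphs are needed, and the ceiling of this quantity equals $d$ only once $d\geq 8$ --- which is precisely where the hypothesis $d\geq 8$ comes from. You instead take $K_{2d,2d}$, cover it by $d$ unipolar graphs each consisting of a cross edge $x_ky_k$, the two stars at $x_k,y_k$, and one member of a $1$-factorization of the $K_{d,d}$ on the high-indexed vertices, and you get the matching lower bound by a local structural argument: in any unipolar subgraph of a triangle-free bipartite graph the clique part has at most one vertex on each side and the cluster part induces a partial matching, so the $(2d-|X_A|)(2d-|Y_A|)$ edges avoiding all clique parts are covered at a rate of at most $\min(2d-|X_A|,2d-|Y_A|)$ per covering graph, which yields $c\geq d$. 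The one point you gloss over is the degenerate case $|X_A|=2d$ or $|Y_A|=2d$, where the division is not permitted; but there $c\geq 2d$ follows immediately from $|X_A|,|Y_A|\leq c$, so nothing is lost. Your route buys the exact value $c_{\mathbf{UNIP}}=d$ for every $d\geq 1$ (so the restriction $d\geq 8$ becomes unnecessary) and uses a witness on $4d$ vertices rather than $2^d$, while still having $\chi=\omega=2$, so it serves the paper's purpose of showing $c_{\mathbf{UNIP}}$ is not bounded by any function of $\chi$ and $\omega$; the paper's hypercube argument has a cruder lower bound that needs no analysis of where the clique parts sit, at the cost of the threshold on $d$ and an exponentially larger example.
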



\section{Proofs}

\begin{proof}[Proof of Theorem \ref{thm:chibound}]
	We are to prove that \[
		c_{\{\chi \leq f(\omega)\}} = \left\lceil \frac{\log \chi(G)}{\log f(\omega(G))} \right\rceil
	\]
	
	Let us first prove the $\geq$ direction. In this case, we can safely remove the integer part sign and rearrange the equation into\[
		\chi(G) \leq f(\omega)^{c_{\{\chi \leq f(\omega)\}}}
	\]
	
	Thus, to prove the $\geq$ direction, we merely need to show a coloring of any graph $G$ by $f(\omega)^{c_{\{\chi \leq f(\omega)\}}}$ colors. Take any optimal covering of $G$ by $t = c_{\{\chi \leq f(\omega)\}}(G)$ graphs $G = G_1 \cup \dots \cup G_t$ where $\chi(G_i) \leq f(\omega(G_i))$ for all $G_i$'s. Note that each $G_i$ has $\omega(G_i) \leq \omega(G)$.
	
	Take an optimal coloring of each of the covering graphs $G_i$. We can now color the original graph $G$ by coloring each vertex with a $t$-tuple containing the colors of the vertex by the optimal colorings of the $G_i$'s. This results in a proper coloring, since any edge $e = xy$ of $G$ is covered by at least one $G_i$, resulting in at least one coordinate $i$ where the colors of $x$ and $y$ differ. In total, this is a coloring by at most\[
		\prod_{i = 1}^{t} \chi(G_i) \leq \prod_{i = 1}^{t} f(\omega(G_i)) \leq \prod_{i = 1}^{t} f(\omega(G)) = f(\omega(G))^t
	\] colors, proving the inequality.
	
	To prove the opposite inequality, it suffices to show a covering of $G$ with $\lceil\log_{f(\omega(G))}\chi(G)\rceil$-many graphs with $\chi \leq f(\omega)$. As a first step, take an optimal coloring of the vertices of $G$ with $\chi(G)$ colors. 
	Relabel the colors such that each color is a function $\phi\colon A = \{1, \dots, \lceil\log_{f(\omega(G))}\chi(G)\rceil\} \to \{1, \dots, f(\omega(G))\}$. There are enough such functions to make this possible. Let $\Omega$ be a maximal clique in $G$. We can assume without loss of generality that the vertices of $\Omega$ are colored by some constant functions, as $f(\omega(G)) \geq \omega(G)$, since we assumed that $f$ majorizes the identity function. For each vertex $x \in V(G)$, denote its color as a function by $\phi_x$.
		
	Now for each element $i \in A$, let $G_i$ consist of edges $e = xy$ where $\phi_x(i) \neq \phi_y(i)$. Then all edges are covered by at least one $G_i$, since the colors of adjacent vertices necessarily differ as functions. Since $\Omega$ was colored by the constant functions, each $G_i$ contains all edges of $\Omega$, therefore $\omega(G_i) = \omega(G)$. Furthermore, each $G_i$ can be colored with $f(\omega(G))$ colors by using the value of $\phi(i)$ at each vertex. Therefore the $G_i$'s form the desired covering of $G$ by $\chi \leq f(\omega)$ graphs.
\end{proof}
	
\begin{proof}[Proof of Theorem \ref{thm:far}]
	Let $0 < k \leq \ell$ be the desired cover numbers. Let us first construct a graph $G$ in the following way. Let $Z_{2^\ell}$ be a graph with $\omega(Z_{2^\ell}) = 2$ and $\chi(Z_\ell) = {2^\ell}$. Such graphs exist, for example we can use the famous construction of Zykov \cite{Zykov1949} to obtain $Z_{2^\ell}$. We then take the complete graph $K_{2^\ell}$ and have $G = Z_{2^\ell} \cup K_{2^\ell}$ as a disjoint union. $G$ now has $\chi(G) = \omega(G) = 2^\ell$, but if one were to cover $G$ with perfect graphs as $G = \bigcup P_i$, then this would induce a covering of $Z_{2^\ell}$ by perfect graphs. Due to the trivial inequality $c_{\mathbf{PERF}} \geq c_{\{\chi = \omega\}}$ derived from Theorem \ref{thm:triv}, and Corollary \ref{thm:chiomega}, we can thus conclude that this covering of $Z_{2^\ell}$ needs at least $\log {2^\ell} = \ell$ perfect graphs.
	
	If $k = 2$, then we are done. Otherwise take a graph $R_k$ which has $c_{\{\chi = \omega\}}(R_k) = k$. This is possible, since all we need is a graph with $\chi(R_k) = \omega(R_k)^k$. Then the disjoint union $Z_{2^\ell} \cup K_{2^\ell} \cup R_k$ has exactly the desired cover numbers.
\end{proof}

\begin{proof}[Proof of Theorem \ref{thm:far2}]
	A graph is called a \textit{comparability graph} if it can be obtained from a partially ordered set $(P, \preceq)$ by taking $P$ as the vertex set and connecting two vertices if they are comparable by $\preceq$. It is known that all comparability graphs are perfect, this result is easy to derive from Mirsky's theorem \cite{Mirsky1971}. Let us write the cover number by comparability graphs of a graph $G$ as $c_{\mathbf{COMP}}(G)$.

	We use the construction from \cite[Theorem 8]{GyarfasMaritsToth2024}. In that paper, it is proven that there exist perfect graphs with arbitrarily large cover number by comparability graphs. In the same paper, it is also proved \cite[Theorem 4]{GyarfasMaritsToth2024} that all GSP graphs have $c_{\mathbf{COMP}} \leq 2$.
	
	Let $H_k$ thus be a graph that is perfect and $c_{\mathbf{COMP}}(H_k) = k$. Then trivially $c_{\mathbf{PERF}}(H_k) = 1$. If we then have a covering $H_k = G_1 \cup \dots \cup G_c$ by GSP graphs for some $c$, then since each of the $G_i$'s are covered by two comparability graphs, we have a covering of $H_k$ by $2c$ comparability graphs, meaning that $2c \geq k$. Thus, we must have $c_{\mathbf{GSP}}(H_k) \geq \frac{k}{2}$. 
\end{proof}

\begin{proof}[Proof of Theorem \ref{thm:far3}]
	Let $A_{k,\ell}$ be a graph obtained as a disjoint union of $k$ complete graphs $K_\ell$. These are then unipolar, and therefore GSP.
	
	Notice that any co-unipolar graph $G = A \cup B$ has the following properties. We have $A$ an independent set and $B$ a complete partite graph. If $B$ consists of a single partite graph (i.e. its complement is a clique) then $G$ is bipartite. Otherwise $B$ has internal edges, meaning that the edge set of $G$ is connected.
	
	Thus, to cover $A_{k,\ell}$ by co-unipolar graphs, 
	we can use the two types of co-unipolar graphs described above. Using bipartite graphs needs at least $\log \ell$ components to cover any single component of $A_{k,\ell}$, as per the results from \cite{HararyHsuMiller1977}. Using the non-bipartite type of co-unipolar graphs needs at least one covering graph per component, in total needing at least $k$, which is enough. It is easy to see that any `mixed' strategy using both types of co-unipolar graphs will fail to cover $A_{k,\ell}$ with less than $\min\{\log \ell, k\}$ graphs. 
	
	Thus, $A_{k,\ell}$ is a graph with $c_{\mathbf{GSP}}(A_{k,\ell}) = 1$ and $c_{\mathbf{coUNIP}}(A_{k,\ell}) = \min\{\log \ell, k\}$.
\end{proof}

\begin{proof}[Proof of Theorem \ref{thm:far4}]
	Since for any graph $G$, $c_{\mathbf{BIP}}(G) = \lceil \log \chi(G) \rceil$ as per the result of Harary, Hsu and Miller \cite{HararyHsuMiller1977}, we can take $B_k$ to be the complete graphs. Complete graphs are co-unipolar, as $K_k = A \cup B$ where $A = \emptyset$ and $B$ is a $k$-partite graph where each partite set is a single vertex.
\end{proof}

\begin{proof}[Proof of Theorem \ref{thm:uniplarge}]
	The desired graphs $Q_d$ are the hypercube graphs defined by $V(Q_d) = \{0, 1\}^d$, with edges between vertices that have exactly one differing coordinate. Indeed, these graphs are bipartite, where the bipartition is based on the parity of the number of $1$'s in each vertex.
	
	A potential covering of $Q_d$ by $d$ unipolar graphs is the following. Let $G_i$ consist of the edges that connect vertices that differ in the $i$'th coordinate. Each of these $G_i$'s now consist of $2^{d-1}$ independent cliques, forming a unipolar graph. This defines a covering -- in fact, a partition -- of $Q_d$ by $d$ unipolar graphs.
	
	To show that for large values of $d$, there is no better covering, let us consider the largest possible unipolar subgraph of $Q_d$. Since $Q_d$ only has cliques of size 2, the $A$ part of any unipolar subgraph of $Q_d$ has to be a single edge. The $B$ part can then be at most $2^{d-1} - 1$ further cliques in some arrangement. The unipolar graph can also have edges connecting $A$ and $B$, but the amount of these edges can at most be the number of edges incident to the one edge covered by $A$, of which there are $2(d-1)$. Summarizing, a unipolar subgraph of $Q_d$ can have at most $2^{d-1} + 2(d-1)$ edges.
	
	Therefore, to cover the entirety of $Q_d$, which has $d\cdot 2^{d-1}$ edges, at least \[
		\frac{d \cdot 2^{d-1}}{2^{d-1} + 2(d-1)} = d - \frac{2d(d-1)}{2^{d-1} + 2(d-1)}
	\] unipolar graphs are needed. This expression obviously tends to $d$ as $d$ goes to infinity, and indeed, for $d \geq 8$, its upper integer part will be $d$.
	
	Thus for $d \geq 8$, $c_{\mathbf{UNIP}(Q_d)} = d$, proving the theorem.
\end{proof}


\section{Open problems}

The following open problems remain:

\begin{enumerate}
	\item Determining the exact value of $c_{\mathbf{PERF}}$, $c_{\mathbf{coUNIP}}$ and $c_{\mathbf{GSP}}$ in terms of the chromatic number or otherwise.
	\item Determining the exact value of $c_{\mathbf{UNIP}}$. We have shown that in terms of the chromatic number, this value is unbounded, but a formula using different graph parameters may still be attainable.
	\item How to determine whether a given a function $f$ that assigns a number to each graph is a cover number by some class $\mathcal P$?
\end{enumerate}

\section{Acknowledgements}

I would like to thank my supervisor, Kenta Ozeki 
for his numerous useful comments and suggestions.


\end{document}